\newcommand{\tr}{\mathop\mathrm{tr}}
\newtheorem{thm}{Theorem}[section]
\newtheorem{corollary}[thm]{Corollary}
\newtheorem{lemma}[thm]{Lemma}
\newtheorem{definition}[thm]{Definition}
\theoremstyle{remark}
\newcommand{\cH}{\mathcal H}
\title{Random fusion frames are nearly equiangular and tight}
\author[B.G. Bodmann]{Bernhard G. Bodmann}
\address{Department of Mathematics, University of Houston, Houston, TX 77204-3008}
\email{bgb@math.uh.edu}
\thanks{The research presented in this paper was supported by NSF DMS 1109545 and 
by AFOSR FA9550-11-1-0245.}
\begin{document}

\begin{abstract}
This paper demonstrates 
that random, independently chosen equi-di\-men\-sion\-al subspaces with a unitarily invariant distribution in a real Hilbert space provide nearly tight, nearly equiangular fusion frames.
The angle between a pair of subspaces is measured in terms of the Hilbert-Schmidt inner product of the
corresponding orthogonal projections. If the subspaces are selected at random, then a measure concentration
argument shows that these inner products concentrate near an average value. Overwhelming success probability 
for near tightness and equiangularity is
guaranteed if the dimension of the subspaces is sufficiently small compared to that of the Hilbert space and if
the dimension of the Hilbert space is small compared to the sum of all subspace dimensions.
\end{abstract}

\maketitle

\section{Introduction}

A collection of closed  subspaces in a Hilbert space is a fusion frame if a weighted sum of the corresponding
 orthogonal projections provides an approximate identity. 
Research on fusion frames has enjoyed a rapid growth in the frame theory literature of the last decade, see \cite{FFbook}
and references therein. There are
many applications of this field, driven by demands from 
distributed sensing \cite{BKR11}, parallel processing \cite{Bjorstad:1991kx}, communication theory \cite{B}, quantum computing \cite{Bodmann:2007kx} and even neuroscience \cite{Rozell:2006uq}. Fusion frames are important 
for these applications
because they model linear, distributed signal representation strategies.
A primary goal in many of these settings is to make the representation robust to partial data loss when
a signal to be stored or transmitted
is projected onto the subspaces given by a fusion frame. If the components in the subspaces are 
lost then this constitutes an erasure. The performance of a fusion frame 
is typically measured by its ability to compensate for such lost components,
either in a deterministic, adversarial erasure regime or in an averaged sense
with additional statistical assumptions on the signal and on the erasures.
Earlier work on fusion frame design and erasures
has shown that typical notions of optimality have a geometric
characterization: If equi-isoclinic fusion frames exist, then they are optimal for the worst-case error for recovery based on the canonical dual when up to two subspace components are removed,
and if equi-distant fusion frames exist, then they are optimal for the mean-squared error after applying a Wiener
filter to suppress the effect of erasures, see also a simpler setting in which they provide optimality \cite{BB-FOC}. 
The existence and construction of such subspace collections depends on the dimensions and
has remained a challenge despite many efforts \cite{LS,H,GH89,SPLAG,ET06,ET07,ETF09,BCPST10,CCHKP11}.
The motivation for this paper is to show that  a random choice of subspaces is with high probability nearly tight and nearly equiangular.
In a certain regime, this was already accomplished  in an earlier paper by the construction of nearly tight frames with nearly isoclinic
subspaces from matrices with the restricted isometry \cite{BCC12}, but this only resulted in nearly orthogonal subspaces. 
It somehow seems natural that a random selection favors mutual orthogonality which is in a way the simplest case of equiangularity.
A generalized form of the Welch bound shows that an equiangular, equi-dimensional tight
fusion frame minimizes the maximum value for the Hilbert-Schmidt inner product of
any pair of orthogonal projections $P$ and $P'$ corresponding to two subspaces, resulting in
the value of the inner product \cite{KPCL09,BE}
$$
   \tr[ P P' ] = \frac{s(Ks-N)}{(K-1)N} \, .
$$
As $N, K, s \to \infty$, $N \tr[P P']/s^2 \to 1$.   We establish these asymptotics for
a random choice of subspaces.
We cover this case of non-orthogonal equiangular subspaces by further developing the partial orthonormalization strategy
 applied to frames that can be partitioned into nearly tight Riesz sequences  from \cite{BCC12}.
Such frames are implicitly constructed in the compressed sensing literature as the column vectors of sensing matrices
that have the restricted isometry property. Many of the standard construction methods rely on randomization,
meaning they pick a random sensing matrix which is shown to have the restricted isometry property with high probability.
 In the present paper, we use a similar strategy with 
a specific choice of random frames and investigate the properties of the corresponding fusion frames, given by unitarily invariant, independently selected subspaces.
We conclude that these are 
with overwhelming probability nearly tight and nearly equidistant, and hence near-optimal
for many applications.

The main results are summarized as follows, see the next section for notation:

If $\{V_j\}_{j=1}^K$ are $s$-dimensional subspaces that are chosen independently at random
in a $N$-dimensional Hilbert space, with a unitarily invariant distribution, and $\epsilon,\delta>0$, $1+\epsilon=(1+\delta)^6$, then
with probability at least 
$$
   1 -2  (1+ \frac{4}{\delta})^N e^{-M\delta^2/4 + M \delta^3/3} - 2 K (1+ \frac{4}{\delta})^s e^{-N\delta^2/4 + N \delta^3/3}
$$
they form an $\epsilon$-nearly $Ks/M$-tight fusion frame.

Under the same assumptions, letting each $P_j$ denote the orthogonal projection onto $V_j$,
and with $1+\epsilon=(1+\delta)^3$,
the subspaces form a nearly equiangular fusion frame, meaning for all $j \ne l$,
$$
  \frac{1}{1+\epsilon} - \epsilon \sqrt \frac{ (1+\epsilon)N}{ s} \le \frac{N}{s^2} \tr[P_j P_l] 
  \le 1+\epsilon(1+ \sqrt{\frac{(1+\epsilon)N}{s}})+\frac{N \epsilon^2}{4 s}  
 $$
  where the failure probability is bounded by the sum of
  $$
    K(K+1)s e^{(1+\delta)s/2-s(s-1)(\delta^2/2-\delta^3/3)/2} \, 
    $$  
  and
  $$
   K(K+1) \bigl( (M+K (1+ \frac{4}{\delta})^s) e^{-N\delta^2/4 + N \delta^3/3} +   (1+ \frac{4}{\delta})^N e^{-M\delta^2/4 + M \delta^3/3} \bigr) \, .
  $$
Examining the terms in these error bounds shows that as $N,s,K \to \infty$ and $s/N \to c>0$ then the failure probability
decays exponentially in $N$ if 
$$
   \frac{3\ln (K+1)}{N} + \frac{s}{N} \ln (1+ \frac 4 \delta ) < \delta^2/4-\delta^3/3
$$
and
$$
   \frac{N}{Ks} \ln (1+ \frac 4 \delta ) < \delta^2/4-\delta^3/3 \, .
$$

This paper is organized as follows: In the next section, we fix notation and recall elementary results.
Section~\ref{sec:tight} demonstrates that random subspaces lead to nearly tight fusion frames.
The final section is dedicated to showing that the subspaces are nearly equiangular.

\section{Frames, Fusion Frames and Riesz sequences}

We briefly review frames, fusion frames and Riesz sequences.

\begin{definition}
A family of vectors $\{\varphi_j\}_{i\in J}$ in a  real or complex Hilbert space $\cH$ is a {\it frame} for
$\cH$ if there are constants $0<A\le B < \infty$ so that for all $x \in \cH$ we have
\[ A\|x\|^2 \le \sum_{j\in J}|\langle x,\varphi_j\rangle|^2 \le B \|x\|^2.\]
If $A=B$ then we say that the frame is $A${\it -tight}, and if $A=B=1$, it is a {\it Parseval frame}.
If there is $c>0$ and $\|\varphi_j\|=c$ for all $j \in J$ then it is called an {\it equal norm frame}, and if $c=1$ it is
a {\it unit norm frame}.   The {\it analysis operator} of the frame $T:\cH
\rightarrow \ell_2(J)$ is  given by
$ (Tx)_j  =  \langle x,\varphi_j\rangle $.
The {\it synthesis operator} $T^*$ is the (Hilbert) adjoint of $T$
and the {\it frame operator} is the positive self-adjoint invertible operator
$S=T^*T$. 
\end{definition}
We recall that if $\{\varphi_j\}_{j \in J}$ is a frame with frame operator $S$ then $\{S^{-1/2}\varphi_j\}_{j\in J}$  is a Parseval frame for $\cH$.

While frames assign scalar coefficients to a vector,
fusion frames map it to its components in subspaces \cite{CK04}.

\begin{definition}Given a  real or complex Hilbert space $\cH$ 
and a family of closed subspaces $\{ W_i\}_{i \in K}$ with
associated positive weights $0<v_i$, $i \in K$, then $\{{W}_i,v_i\}_{i \in K}$ 
is a {\em fusion frame} for $\cH$ if
there exist
constants $0 < A \le B < \infty$ such that
\[
A\|x\|^2 \le \sum_{i \in K} v_i^2 \|P_i x\|^2 \le 
B\|x\|^2 \qquad \mbox{for any }x \in \cH,
\]
where each $P_i$ is the orthogonal projection onto $\mathcal{W}_i$. 
A fusion frame is called {\em tight} if $A$ and $B$ can be chosen
to be equal, and {\em Parseval} if $A=B=1$.
For $\epsilon>0$, the fusion 
frame is $\epsilon${\em -nearly tight}
if there is a constant $C$ so that $A=\frac{1}{1+\epsilon}C,\ B=(1+\epsilon)C$.
The fusion frame is {\em equi-dimensional} if all its subspaces $W_i$ have the same dimension.
If $\{W_i\}_{i\in K}$ are closed subspaces of $\cH$, we define the space
\[ \bigoplus_{i\in K} W_i = \{ \psi=(\psi_i)_{i\in K} |\ \psi_i \in W_i, \langle \psi, \psi \rangle < \infty \},\]
with the inner product  given by
\[ \left \langle (\psi_i)_{i\in K},(\phi_i)_{i\in K}\right \rangle = \sum_{i\in K}\langle
\psi_i,\phi_i\rangle.\]
The {\it analysis operator} of the fusion frame is the operator 
\[T:\cH \rightarrow  \bigoplus_{i\in K} W_i,\]
given by
\[ Tx = (v_i P_i x)_{i\in K}.\]
The {\it synthesis operator} of the fusion frame is $T^*$ and is given by
\[ T^*(\psi_i)_{i\in K} = \sum_{i\in K} v_i\psi_i.\]
In analogy with frames, the fusion frame operator is the positive, self-adjoint and invertible operator
$S=T^* T$.
\end{definition}

We also need the notion of $\epsilon$-Riesz sequences.
We choose the convention from \cite{BCC12} for
tight Riesz sequences which is convenient for orthonormalization.

\begin{definition} \label{D5} 
A family of vectors $\{\varphi_i\}_{i=1}^N$ in a Hilbert space $\cH$ is a Riesz basic sequence with 
lower (resp. upper) Riesz bounds $0<A\le B< \infty$ if for all scalars $\{a_i\}_{i=1}^N$
we have
\[ A\sum_{i=1}^N|a_i|^2 \le \|\sum_{i=1}^Na_i\varphi_i\|^2 \le B \sum_{i=1}^N|a_i|^2.\]
This family of vectors is $\epsilon$-Riesz basic 
if for all scalars $\{a_i\}_{i=1}^N$ we have
\[ \frac{1}{1+\epsilon}\sum_{i=1}^N|a_i|^2 \le \|\sum_{i=1}^Na_i\varphi_i\|^2 \le
(1+\epsilon)\sum_{i=1}^N|a_i|^2.\]
\end{definition}

\section{Random fusion frames are nearly tight} \label{sec:tight}

We first recall that random Gaussian vectors form a frame that can be partitioned into nearly orthonormal systems. 

\begin{lemma}
Let $X$ be a random $N\times M$ matrix whose entries are independent, standard-normal distributed random variables,
and let $X_j$ denote the vector containing the entries of the $j$th row.
Let $u \in \mathbb R^M$, $\|u\|=1$, and $Z= \frac 1 N \sum_{j=1}^N |\langle u, X_j \rangle |^2$ then 
$$
 \mathbb P ( Z \ge  1 + \delta ) \le e^{-N\delta^2/4 + N \delta^3/6}
 $$
and
$$
  \mathbb P ( Z \le  \frac{1}{1 + \delta} ) \le e^{-N\delta^2/4 + N \delta^3/3}
$$
\end{lemma}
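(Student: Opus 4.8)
The plan is to reduce the statement to a standard Cramér–Chernoff tail estimate for a chi-squared random variable. The key first observation is that, because each row $X_j$ consists of independent standard-normal entries and $\|u\|=1$, the scalar $Y_j := \langle u, X_j \rangle$ is itself standard normal, and the $Y_j$ are independent across $j$ since the rows are. Consequently $N Z = \sum_{j=1}^N Y_j^2$ has a chi-squared distribution with $N$ degrees of freedom, and the two claimed bounds are precisely the upper- and lower-tail estimates for $\frac{1}{N}\chi^2_N$. The ambient dimension $M$ drops out entirely: it enters only through the unit vector $u$, which contributes a single standard Gaussian per row.

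For the upper tail I would apply Markov's inequality to $e^{t N Z}$ with $0<t<1/2$, using the moment generating function $\mathbb{E}[e^{t Y_j^2}] = (1-2t)^{-1/2}$ together with independence to obtain $\mathbb{P}(Z \ge 1+\delta) \le e^{-tN(1+\delta)}(1-2t)^{-N/2}$. Minimizing the exponent over $t$ gives the optimizer $t = \delta/(2(1+\delta))$, which lies in $(0,1/2)$ for every $\delta>0$ so that the transform is finite, and yields the bound $e^{-\frac{N}{2}(\delta - \ln(1+\delta))}$. For the lower tail I would instead bound $e^{-t N Z}$ with $t>0$, using $\mathbb{E}[e^{-t Y_j^2}] = (1+2t)^{-1/2}$, to get $\mathbb{P}(Z \le \frac{1}{1+\delta}) \le e^{tN/(1+\delta)}(1+2t)^{-N/2}$; here the optimal choice is $t=\delta/2$, producing $\exp\bigl(\tfrac{N}{2}(\tfrac{\delta}{1+\delta} - \ln(1+\delta))\bigr)$.

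The final step is to bound these two exponents by the stated polynomials in $\delta$. The upper-tail claim reduces to the elementary inequality $\delta - \ln(1+\delta) \ge \delta^2/2 - \delta^3/3$, and the lower-tail claim to $\ln(1+\delta) - \frac{\delta}{1+\delta} \ge \delta^2/2 - 2\delta^3/3$. Both follow by expanding the left-hand sides as alternating power series, $\delta - \ln(1+\delta) = \delta^2/2 - \delta^3/3 + \delta^4/4 - \cdots$ and $\ln(1+\delta) - \frac{\delta}{1+\delta} = \delta^2/2 - 2\delta^3/3 + 3\delta^4/4 - \cdots$, and then checking that the remainder left after truncating at the $\delta^3$ term is nonnegative for the relevant range of $\delta$ (with the large-$\delta$ regime being trivial, since the comparison polynomials turn negative there). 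I expect this last, purely calculus-flavored step to be the only mildly delicate point, as it requires confirming the sign of a series remainder rather than any part of the concentration mechanism, which is entirely routine.
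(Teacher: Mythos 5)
Your proposal is correct and follows essentially the same route as the paper: reduce to a $\chi^2_N$ variable (the paper does this via unitary invariance, you via the equivalent observation that $\langle u,X_j\rangle$ is standard normal), apply the Chernoff bound with the same optimal parameter (up to a factor-of-two reparametrization of $t$), and finish with the same elementary inequalities $\delta-\ln(1+\delta)\ge \delta^2/2-\delta^3/3$ and $\ln(1+\delta)-\delta/(1+\delta)\ge \delta^2/2-2\delta^3/3$. No substantive differences.
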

\begin{proof} The distribution of the row vectors is unitarily invariant, so
without loss of generality, we can set $u=e_1$, the first vector of the canonical basis for $\mathbb R^M$. The sum 
$Z= \frac 1 N \sum_{j=1}^N X_{j,1}^2$ 
is up to the normalization factor $1/N$ chi-squared distributed with  $N$ degrees of freedom.
The usual combination of the Laplace transform and the Chernoff bound gives measure concentration. 
We have 
$$
  \mathbb P(Z \ge 1+\delta ) \le (1-t)^{-N/2} e^{- t N (1+\delta)/2} 
$$
for any $0 \le t <1$, so after choosing $t = \delta/(1+\delta)$ and truncating the Taylor expansion of the exponential
$$
  \mathbb P(Z \ge 1+\delta )  \le e^{N \ln(1+\delta)/2 -N \delta/2} \le e^{-N\delta^2/4 + N \delta^3/6} \, .
$$
Similarly, for $t \ge 0$,
$$
   \mathbb P(Z \le \frac{1}{1+\delta} ) \le e^{- N \ln(1+t)/2 + t N (1+\delta)^{-1}/2}
$$
so setting $t = \delta$ and comparing the terms of the Taylor series in the exponent
gives the desired bound.
\end{proof}

Choosing $u$ among the $M$ canonical basis vectors in $\mathbb R^M$ together with a union bound shows that the norms of
all the columns are nearly a constant.

\begin{corollary}  \label{cor:norm}
With the random $N\times M$ matrix $X$ as above, 
$$
   \mathbb P( \frac{1}{1+\delta} \le \frac 1 N \sum_{j=1}^N X_{j,l}^2 \le 1 + \delta \mbox{ for all } l ) 
   \ge 1 - 2 M e^{-N\delta^2/4 + N \delta^3/3} \, .
$$
\end{corollary}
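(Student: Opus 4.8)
The plan is to apply the preceding Lemma separately to each of the $M$ canonical basis vectors and then to combine the resulting tail estimates with a union bound. First I would fix a column index $l \in \{1, \dots, M\}$ and set $u = e_l$ in the Lemma. Since $\langle e_l, X_j \rangle = X_{j,l}$, the random variable $Z$ in the Lemma becomes exactly $\frac{1}{N}\sum_{j=1}^N X_{j,l}^2$, so the two tail bounds specialize to
$$
  \mathbb{P}\Bigl( \tfrac{1}{N}\sum_{j=1}^N X_{j,l}^2 \ge 1+\delta \Bigr) \le e^{-N\delta^2/4 + N\delta^3/6}
$$
and
$$
  \mathbb{P}\Bigl( \tfrac{1}{N}\sum_{j=1}^N X_{j,l}^2 \le \tfrac{1}{1+\delta} \Bigr) \le e^{-N\delta^2/4 + N\delta^3/3} \, .
$$

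Next I would bound the probability that the two-sided estimate fails for this fixed $l$. The failure event is the union of the two one-sided events above, so its probability is at most the sum $e^{-N\delta^2/4 + N\delta^3/6} + e^{-N\delta^2/4 + N\delta^3/3}$. Since $\delta^3/6 \le \delta^3/3$, the upper-tail bound is itself at most $e^{-N\delta^2/4 + N\delta^3/3}$, so the per-column failure probability is at most $2 e^{-N\delta^2/4 + N\delta^3/3}$.

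Finally I would take a union bound over the $M$ columns. The probability that the two-sided estimate fails for at least one value of $l$ is then at most $2M e^{-N\delta^2/4 + N\delta^3/3}$, and passing to the complement yields the claimed lower bound on the probability that $\frac{1}{1+\delta} \le \frac{1}{N}\sum_{j=1}^N X_{j,l}^2 \le 1+\delta$ holds simultaneously for all $l$. There is really no substantive obstacle here, since the statement is an immediate consequence of the Lemma; the only point requiring a moment's care is the consolidation of the two distinct exponents into a single common bound, which is what produces the clean factor of $2M$ rather than a sum of two slightly different exponential terms.
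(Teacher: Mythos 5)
Your proof is correct and follows exactly the route the paper indicates (the paper gives no separate proof, only the remark that one chooses $u$ among the $M$ canonical basis vectors and applies a union bound). Your added observation that the upper-tail exponent $-N\delta^2/4+N\delta^3/6$ can be absorbed into the weaker $-N\delta^2/4+N\delta^3/3$ is precisely the small consolidation needed to obtain the stated factor $2M$.
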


We derive a stronger consequence with an argument similar to the exposition in Baraniuk et al. \cite{BDDVW},
as presented in \cite{BB-FOC}: With high probability, sufficiently small subsets of the column vectors of 
$X$ have nearly tight upper and lower Riesz bounds.

\begin{lemma} \label{lem:riesz}
Let $W=\mathrm{span}\{e_{j_1}, e_{j_2}, \dots e_{j_s}\}$
in $\mathbb R^M$, let $\{X_j\}_{j=1}^N$ be a random family of vectors in $\mathbb R^M$ as above
and let $0<\delta<2$, then the set 
\begin{equation*} \label{ineq:Pconci}
    \mathcal X_\le = \{X: Z(y) \le \frac{1}{(1+\delta)^{3}} \|y\|^2 \mbox{ for all } y \in W \}
  \end{equation*}
  defined by the random variables
$Z(y)= \frac 1 N \sum_{j=1}^N |\langle y, X_j \rangle |^2$ for $y \in W$
has probability
$$
  \mathbb P ( \mathcal X_\le) \le  (1+ \frac{4}{\delta})^s e^{-N\delta^2/4 + N \delta^3/3} \, .
$$
Moreover, if $\delta < 1$, then
$$
  \mathcal X_\ge = \{X: Z(y) \ge {(1+\delta)^{3}} \|y\|^2 \mbox{ for all } y \in W \}
$$
has the same upper bound for its probability as $\mathcal X_\le$.
\end{lemma}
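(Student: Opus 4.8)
The plan is to read \(\mathcal X_\ge\) and \(\mathcal X_\le\) as the failure events for the upper and lower Riesz bounds of the family \(\{X_j\}_{j=1}^N\) restricted to \(W\), and to control them by a net argument in the spirit of Baraniuk et al. Writing \(A\) for the \(N\times s\) Gaussian block of \(X\) built from the columns \(j_1,\dots,j_s\), we have \(Z(y)=\frac1N\|Ay\|^2\) for \(y\in W\), so that \(\sup_{\|y\|=1}Z(y)\) and \(\inf_{\|y\|=1}Z(y)\) are the extreme normalized squared singular values of \(A\), and the two events record that one of them leaves the interval \([(1+\delta)^{-3},(1+\delta)^{3}]\). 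The restrictions of the rows to \(W\) are independent standard Gaussian vectors in \(W\cong\RR^s\), so the law of \(Z(y)\) is rotation invariant on the unit sphere of \(W\); hence for each \emph{fixed} unit \(y\) the variable \(NZ(y)\) is chi-squared with \(N\) degrees of freedom, and the preceding lemma gives \(\mathbb P(Z(y)\ge 1+\delta)\le e^{-N\delta^2/4+N\delta^3/6}\) together with \(\mathbb P(Z(y)\le(1+\delta)^{-1})\le e^{-N\delta^2/4+N\delta^3/3}\). The entire task is to upgrade these pointwise bounds to an estimate valid simultaneously for all \(y\).

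First I would discretize the unit sphere \(S_W=\{y\in W:\|y\|=1\}\). The standard volumetric estimate furnishes a \((\delta/2)\)-net \(Q\subset S_W\) of cardinality \(|Q|\le(1+4/\delta)^s\); the radius \(\delta/2\) is chosen exactly so that this count reproduces the combinatorial factor in the claimed probability. A union bound over \(Q\), using the larger (lower-tail) exponent since \(\delta^3/6\le\delta^3/3\), shows that outside an event of probability at most \((1+4/\delta)^s\,e^{-N\delta^2/4+N\delta^3/3}\) every net point satisfies \((1+\delta)^{-1}\le Z(q)\le 1+\delta\).

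Next I would transfer this control from \(Q\) to all of \(S_W\) using that \(y\mapsto\sqrt{Z(y)}=N^{-1/2}\|Ay\|\) is a seminorm. For the upper bound, with \(\sigma=\sup_{S_W}\sqrt{Z}\) and a net point \(q\) within \(\delta/2\) of a maximizer, the triangle inequality yields \(\sigma\le\sqrt{1+\delta}+\sigma(\delta/2)\), hence \(\sigma\le\sqrt{1+\delta}/(1-\delta/2)\); the elementary inequality \(\sqrt{1+\delta}/(1-\delta/2)\le(1+\delta)^{3/2}\), equivalent to \((1+\delta)(1-\delta/2)\ge1\) and thus valid precisely for \(\delta\le1\), closes the estimate and accounts for the extra hypothesis \(\delta<1\) attached to \(\mathcal X_\ge\). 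The lower bound is handled by the same seminorm inequality applied to a minimizer, now fed with the operator-norm control just obtained; the restriction \(\delta<2\) enters through the requirement \(1-\delta/2>0\) that keeps the transfer meaningful.

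The step I expect to be the main obstacle is precisely this lower-bound transfer. The Lipschitz constant of \(\sqrt Z\) on \(S_W\) is the random operator norm \(\sigma\) itself, so bounding \(\inf_{S_W}\sqrt Z\) from below unavoidably couples to an upper bound on \(\sigma\); the cube of \((1+\delta)\) factors and the net radius \(\delta/2\) are engineered so that the gap between the net estimate \((1+\delta)^{-1}\) and the target \((1+\delta)^{-3}\) absorbs this coupling. Arranging the arithmetic so that the failure probability collapses to the single term \((1+4/\delta)^s e^{-N\delta^2/4+N\delta^3/3}\) over the stated range of \(\delta\) is the delicate part; the remainder is the routine combination of rotation invariance, a covering number, a union bound, and the triangle inequality for \(\sqrt{Z(\cdot)}\).
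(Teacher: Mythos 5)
Your outline coincides with the paper's: pointwise chi-squared concentration from rotation invariance, a $(\delta/2)$-net of cardinality $(1+4/\delta)^s$, a union bound over the net, and a transfer to all of $W$ via the triangle inequality for the seminorm $Z^{1/2}$. Your treatment of $\mathcal X_\ge$ is essentially the paper's argument verbatim: from $\sigma\le\sqrt{1+\delta}+\sigma\delta/2$ one gets $\sigma^2\le(1+\delta)(1-\delta/2)^{-2}$, and $(1-\delta/2)^{-1}\le 1+\delta$ for $\delta\le1$ closes it. That half is complete and correct, and it only needs the upper-tail events on the net, so the single term $(1+4/\delta)^s e^{-N\delta^2/4+N\delta^3/3}$ is legitimate there.

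The gap is in the half you yourself flag as the obstacle, the transfer for $\mathcal X_\le$, and it is not merely a matter of ``arranging the arithmetic'': on the route you describe the arithmetic does not close. Feeding $\sigma\le\sqrt{1+\delta}/(1-\delta/2)$ into $\inf_{\|y\|=1}Z^{1/2}(y)\ge(1+\delta)^{-1/2}-\sigma\delta/2$ yields at least $(1+\delta)^{-3/2}$ exactly when $(1+\delta)^2\le 2-\delta$, i.e.\ $3\delta+\delta^2\le1$, hence only for $\delta\lesssim0.30$ --- far short of the stated range $0<\delta<2$. Moreover this route requires two-sided control at every net point, so its failure probability is $(1+4/\delta)^s\bigl(e^{-N\delta^2/4+N\delta^3/6}+e^{-N\delta^2/4+N\delta^3/3}\bigr)$, up to twice the single term claimed. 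The paper avoids both issues by a different transfer: it uses only the lower-tail events $Z(w)\ge(1+\delta)^{-1}\|w\|^2$ on the net and bounds $Z^{1/2}(y-w)$ by $\frac{\delta/2}{(1+a)^{1/2}}$ with $a$ the optimal lower Riesz constant itself, arriving at the self-consistent inequality $(1+a)^{-1/2}\ge(1+\delta)^{-1/2}-\frac{\delta}{2}(1+a)^{-1/2}$ and hence $1+a\le(1+\delta)(1+\delta/2)^2\le(1+\delta)^3$ for every $\delta>0$. Note that this bound on $Z^{1/2}(y-w)$ is not the operator-norm (Lipschitz) bound you would use --- your observation that the Lipschitz constant of $Z^{1/2}$ is $\sigma$ and not the infimum is precisely where the two arguments part ways --- so to reproduce the stated constants you must either justify that step or supply a sharper transfer; as written, your sketch proves a weaker statement with worse constants and a smaller admissible range of $\delta$.
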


\begin{proof}
 Using the triangle inequality
 and Lipschitz continuity of the norm we can bootstrap from
 a net $S$ with $\displaystyle\min_{w\in S}\|y-w\|\le \frac{\delta}{2}$ for all $y\in W, \|y\|=1$. By a volume inequality for sphere packings, we know there is such an $S$ with
 cardinality
\[
 |S|\le\left(1+\frac{4}{\delta}\right)^s \, .
\]
Applying a union bound  
for the probability of the complement of
\[
 \mathcal X_S = \{X: Z(w)\ge\frac{1}{1+\delta} \|w\|^2 \,  \mbox{for all} \,  w\in S \}
\]
we get
$$
  \mathbb P(\mathcal X_S) \ge 1- (1+\frac{4}{\delta})^s e^{-N\delta^2/4 + N \delta^3/3}
$$
Now let $a$ be the smallest number such that 
$Z^{1/2}(y)\ge\frac{1}{(1+a)^{1/2}}\|y\|$ holds for all $y\in W$.
We show $1+a\le(1+\delta)^3$. To see this, let $y\in W, \|y\|=1$ and pick $w\in S,\|x-w\|\le \frac{\delta} 2$.
Then, using the triangle inequality yields
\[
 Z^{1/2}(y)\ge
  | Z^{1/2}(w)-Z^{1/2}(y-w) |\ge \frac{1}{(1+\delta)^{1/2}}-\frac{\delta/2}{(1+a)^{1/2}}  \, .
\]
Since the right hand side of the inequality chain is independent of $y$, according to the definition of $a$ we obtain
\begin{eqnarray*}
 & &\frac{1}{(1+a)^{1/2}}\ge\frac{1}{(1+\delta)^{1/2}}-\frac{\delta/2}{(1+a)^{1/2}} \, .
 \end{eqnarray*}
Solving for  $(1+a)^{-1}$ and further estimating gives
\begin{eqnarray*}
 \frac{1}{1+a}\ge\frac{1}{(1+{\delta})^{3}} \, .
 \end{eqnarray*}

For the second inequality, choose
\[
 \mathcal X'_S = \{X: Z(w)\le ({1+\delta}) \|w\|^2 \,  \mbox{for all} \,  w\in S \}
\]
and establish the same bound for its probability as for $\mathcal X_S$.
Let $a$ be smallest such that
$$
    Z^{1/2}(y)\le (1+a)^{1/2}\|y\| \, , y \in W.
$$
We again use the triangle inequality 
to obtain that if $X \in \mathcal X'_{S}$ then for any $y\in W$ with $\|y\|=1$
$$
   Z^{1/2}(y) \le (1+\delta)^{1/2} +  (1+a)^{1/2} \frac \delta 2 \, .
$$
Again by definition
$$
  (1+ a)^{1/2} \le (1 + \delta)^{1/2} +(1+ a)^{1/2} \frac \delta 2 
$$
so
$$
  1+a \le \frac{1 + \delta}{(1-\delta/2)^2}
$$
and if $\delta< 1$ then $(1-\delta/2)^{-1} \le 1 + \delta $, which implies
$$
  1+a \le (1+ \delta)^3 \, .    
  $$
\end{proof}

A union bound gives a lower bound for the probability that all
subsets in a partition of the column vectors of $X$ have good Riesz bounds.

\begin{corollary} \label{thm:riesz_conc}
Let $X$ be a random $N\times M$ matrix whose entries are independent, standard-normal distributed random variables, 
$0 < \delta < 1$, and
let $\{1 , 2, \dots, M\}$ be partitioned into
sets $\{J_k\}_{k=1}^K$ of maximal size $\max_k |J_k| \le s$, then with probability
$$
 1 - 2 K (1+ \frac{4}{\delta})^s e^{-N\delta^2/4 + N \delta^3/3} \, .
$$
for each $k$ the set of rescaled column vectors $\{(X_{j,l}/\sqrt N)_{j=1}^N \}_{l \in J_k}$  forms  an
$\epsilon$-Riesz sequence with $\epsilon = (1+\delta)^3-1$.
\end{corollary}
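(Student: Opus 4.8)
The plan is to recognize that the $\epsilon$-Riesz condition for a single block $J_k$ is exactly the event that neither of the bad events $\mathcal X_\le$ nor $\mathcal X_\ge$ from Lemma~\ref{lem:riesz} occurs, and then to combine the $K$ blocks with a union bound. The measure concentration has already been done in the lemma, so the corollary is essentially a translation of language followed by bookkeeping.

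First I would fix $k$ and set $W_k = \mathrm{span}\{e_l : l \in J_k\}$, a subspace of $\mathbb R^M$ of dimension $|J_k| \le s$. For a vector $y = \sum_{l \in J_k} a_l e_l \in W_k$ one has $\langle y, X_j\rangle = \sum_{l \in J_k} a_l X_{j,l}$, so that
$$
  Z(y) = \frac 1 N \sum_{j=1}^N \Bigl| \sum_{l \in J_k} a_l X_{j,l} \Bigr|^2 = \Bigl\| \sum_{l \in J_k} a_l \varphi_l \Bigr\|^2 ,
$$
where $\varphi_l = (X_{j,l}/\sqrt N)_{j=1}^N$ is the $l$th rescaled column vector, while $\|y\|^2 = \sum_{l \in J_k} |a_l|^2$. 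Hence, with $1+\epsilon = (1+\delta)^3$, the pair of inequalities of Definition~\ref{D5} for the family $\{\varphi_l\}_{l \in J_k}$ is equivalent to $\frac{1}{(1+\delta)^3}\|y\|^2 \le Z(y) \le (1+\delta)^3 \|y\|^2$ holding for all $y \in W_k$. The failure of the lower bound is precisely the event $\mathcal X_\le$ and the failure of the upper bound is precisely $\mathcal X_\ge$ attached to $W_k$.

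Next I would apply Lemma~\ref{lem:riesz} to each $W_k$. Although that lemma is stated for a subspace of dimension exactly $s$, the dimension enters only through the cardinality of the covering net, and $(1+\tfrac{4}{\delta})^{\dim W_k} \le (1+\tfrac{4}{\delta})^s$ because $|J_k|\le s$; thus the bounds $\mathbb P(\mathcal X_\le), \mathbb P(\mathcal X_\ge) \le (1+\tfrac{4}{\delta})^s e^{-N\delta^2/4 + N\delta^3/3}$ remain valid, using $0<\delta<1$ so that the second part of the lemma applies. Combining the two one-sided failure events shows that the probability that $\{\varphi_l\}_{l \in J_k}$ fails to be $\epsilon$-Riesz with $\epsilon=(1+\delta)^3-1$ is at most $2(1+\tfrac{4}{\delta})^s e^{-N\delta^2/4 + N\delta^3/3}$.

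Finally, a union bound over $k=1,\dots,K$ bounds the probability that at least one block fails to be $\epsilon$-Riesz by $K$ times this quantity, which yields the claimed success probability $1 - 2K(1+\tfrac{4}{\delta})^s e^{-N\delta^2/4 + N\delta^3/3}$. There is no genuine analytic obstacle in this argument, since all of the concentration is carried by Lemma~\ref{lem:riesz}. The only points that require a moment's care are the reduction from blocks of size at most $s$ to the dimension-$s$ statement of the lemma, and the tracking of how the two one-sided failure probabilities and the $K$ blocks multiply to produce the factor $2K$.
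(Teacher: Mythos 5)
Your proposal is correct and follows the same route as the paper: apply Lemma~\ref{lem:riesz} to the coordinate subspace spanned by each block $J_k$ (absorbing the two one-sided failure events into the factor $2$) and then take a union bound over the $K$ blocks. The paper's own proof is just a terser version of this, leaving implicit the identification of $Z(y)$ with the Riesz quadratic form of the rescaled columns and the observation that blocks of size less than $s$ only shrink the covering-net cardinality.
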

\begin{proof}
If the matrix is multiplied by the normalization factor, then a fixed set of columns $\{(X_{j,l}/\sqrt N)_{j=1}^N \}_{l \in J_k}$ indexed by $J_k$ in the partition
with $|J_k| \le s$ is by the preceding lemma $\epsilon$-Riesz with $1+\epsilon=(1+\delta)^3$ with probability of at least
$1-2(1+ \frac{4}{\delta})^s e^{-N\delta^2/4 + N \delta^3/3}$. Applying the union bound for all $K$ sets in the partition
gives the claimed estimate.
\end{proof}

The same proof allows us to establish frame bounds if we think of $X$ as the analysis operator.
If we choose a trivial partition with $s=M\le N$ then the above lemma states that the family of 
row vectors $\{X_j\}_{j=1}^N$ with standard normal entries forms 
with high probability a nearly tight frame
for $\mathbb R^M$. We swap $N$ and $M$ for later notational convenience.

\begin{corollary} \label{cor:frame}
Let $X$ be a random $M\times N$ matrix whose entries are independent, standard-normal distributed random variables,
then with probability at least
$$
    1-2  (1+ \frac{4}{\delta})^N e^{-M\delta^2/4 + M \delta^3/3} \, .
$$
 the row vectors $\{X_j\}_{j=1}^M$ of $X$  form
 a frame for $\mathbb R^N$ with lower and upper frame bounds
$$
   \frac{M}{(1+\delta)^3} \|x\|^2 \le \sum_{j=1}^M |\langle x, X_j \rangle |^2 \le M (1+\delta)^3 \|x\|^2 , \quad x \in \mathbb R^N \, .
$$
\end{corollary}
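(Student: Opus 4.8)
The plan is to apply Lemma~\ref{lem:riesz} directly, taking the subspace $W$ to be the entire space. First I would observe that, after dividing by $M$, the asserted frame inequality is nothing but a two-sided bound on the quadratic form $Z(x) = \frac 1 M \sum_{j=1}^M |\langle x, X_j \rangle|^2$, namely $\frac{1}{(1+\delta)^3}\|x\|^2 \le Z(x) \le (1+\delta)^3 \|x\|^2$ for every $x \in \mathbb R^N$. Thus it suffices to control the probability that this uniform two-sided bound fails.

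Second, I would invoke Lemma~\ref{lem:riesz} with the roles of $N$ and $M$ interchanged: here $X$ is an $M\times N$ matrix, so there are $M$ row vectors living in $\mathbb R^N$, and the running count of vectors in the lemma is now $M$ while the ambient dimension is $N$. The key point is to take the coordinate subspace $W$ to be all of $\mathbb R^N$, which is the case $s=N$ in the lemma. With this choice, the bad events $\mathcal X_\le$ and $\mathcal X_\ge$ of the lemma become precisely the events that the lower, respectively upper, bound on $Z$ fails somewhere on $\mathbb R^N$, and each has probability at most $(1 + \frac 4 \delta)^N e^{-M\delta^2/4 + M\delta^3/3}$; as in Corollary~\ref{thm:riesz_conc}, the upper-bound estimate uses $\delta < 1$.

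Finally, a union bound over these two events bounds the total failure probability by $2(1 + \frac 4 \delta)^N e^{-M\delta^2/4 + M\delta^3/3}$. On the complementary event both inequalities for $Z$ hold simultaneously, and multiplying through by $M$ converts the bound on $Z(x)$ into the claimed frame bounds $\frac{M}{(1+\delta)^3}\|x\|^2 \le \sum_{j=1}^M |\langle x, X_j\rangle|^2 \le M(1+\delta)^3\|x\|^2$. There is no genuine obstacle here; the only thing requiring care is the bookkeeping of the swap of $N$ and $M$, together with the verification that taking $W$ to be the full space (so that the net argument inside Lemma~\ref{lem:riesz} runs with $s=N$, yielding the factor $(1+\frac 4 \delta)^N$) is legitimate, since that lemma is phrased for a coordinate subspace of $\mathbb R^M$ and we are simply reading off the special case in which the subspace is everything.
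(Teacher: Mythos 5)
Your proposal is correct and is essentially the paper's own argument: the text preceding the corollary explains that one applies Lemma~\ref{lem:riesz} with the trivial partition (the subspace $W$ taken to be the whole space), swaps the roles of $N$ and $M$, and reads off the frame bounds after rescaling by $M$. The only minor point worth noting is that the paper additionally remarks the condition $s=M\le N$ (i.e.\ $N\le M$ in the swapped notation) for the frame interpretation, but your bookkeeping of the dimension swap and the two-sided union bound matches the intended proof exactly.
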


\begin{thm} \label{thm:riesz}
Given a family of random, independent subspaces $\{V_k\}_{k=1}^K$ of $\mathbb R^N$, whose distribution is invariant under unitaries
and whose dimensions $s_k = \dim V_k$ are bounded by $s_k \le s$ for all $k$, and let $M = \sum_{k=1}^K s_k$ then with probability at least 
$$
   1 -2  (1+ \frac{4}{\delta})^N e^{-M\delta^2/4 + M \delta^3/3} - 2 K (1+ \frac{4}{\delta})^s e^{-N\delta^2/4 + N \delta^3/3}
$$
the fusion frame $\{V_j, 1\}_{j=1}^K$ has upper and lower frame bounds $ M/(N(1+\delta)^6)$  and $M (1+\delta)^6/N$.
\end{thm}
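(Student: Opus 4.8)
The plan is to realize the given random subspaces as the spans of blocks of rows of a single Gaussian matrix, and then to exploit the partial-orthonormalization idea implicit in the preceding corollaries: they supply a \emph{global} frame bound for the full collection of row vectors together with a \emph{local} near-tight Riesz bound for each block, and these two facts combine to pin down the fusion frame operator $S=\sum_k P_k$. Since the claimed probability statement depends only on the law of $\{V_k\}_{k=1}^K$, I would first set up a coupling. Each $V_k$ is uniformly (orthogonally) distributed among $s_k$-dimensional subspaces, and the span of $s_k$ independent standard Gaussian vectors in $\mathbb R^N$ has exactly this distribution and is $s_k$-dimensional almost surely (using $s_k\le s\le N$). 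Collecting all $M=\sum_k s_k$ such Gaussian vectors as the rows $\{\varphi_l\}_{l=1}^M$ of one $M\times N$ matrix $\Phi$, grouped into blocks $J_1,\dots,J_K$ with $|J_k|=s_k$, I may assume without loss of generality that $V_k=\mathrm{span}\{\varphi_l\}_{l\in J_k}$, with the $V_k$ independent because the blocks use disjoint, independent Gaussians.

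Next I apply the two corollaries to this same matrix. Corollary~\ref{cor:frame} gives, on an event of probability at least $1-2(1+\tfrac4\delta)^N e^{-M\delta^2/4+M\delta^3/3}$, the global frame bounds
$$
  \frac{M}{(1+\delta)^3}\|x\|^2 \le \sum_{l=1}^M |\langle x,\varphi_l\rangle|^2 \le M(1+\delta)^3\|x\|^2 , \quad x\in\mathbb R^N .
$$
Applying Corollary~\ref{thm:riesz_conc} to $\Phi^T$ with the partition $\{J_k\}$ (so the columns of $\Phi^T$ are exactly the $\varphi_l$) gives, on an event of probability at least $1-2K(1+\tfrac4\delta)^s e^{-N\delta^2/4+N\delta^3/3}$, that for every $k$ the rescaled family $\{\varphi_l/\sqrt N\}_{l\in J_k}$ is $\epsilon$-Riesz with $1+\epsilon=(1+\delta)^3$.

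The local step is where the argument has content. For a finite linearly independent family the Riesz bounds coincide with the frame bounds for its span, because the Gram operator $D^*D$ and the frame operator $DD^*$ (of the synthesis operator $D$) share the same nonzero spectrum. Hence, on the Riesz event, $\{\varphi_l/\sqrt N\}_{l\in J_k}$ is a frame for $V_k$ with bounds $1/(1+\epsilon)$ and $1+\epsilon$. Since each $\varphi_l$ with $l\in J_k$ lies in $V_k$, one has $\langle x,\varphi_l\rangle=\langle P_k x,\varphi_l\rangle$, so applying the frame bound to the vector $P_kx\in V_k$ yields, for each $x$,
$$
  \frac{1}{1+\epsilon}\|P_k x\|^2 \le \frac1N\sum_{l\in J_k}|\langle x,\varphi_l\rangle|^2 \le (1+\epsilon)\|P_k x\|^2 .
$$
Summing over $k$ and using $\sum_k\sum_{l\in J_k}=\sum_{l=1}^M$ together with the global frame bound sandwiches $\sum_k\|P_k x\|^2$ between $\tfrac{1}{1+\epsilon}\cdot\tfrac{M}{N(1+\delta)^3}\|x\|^2$ and $(1+\epsilon)\cdot\tfrac{M(1+\delta)^3}{N}\|x\|^2$; with $1+\epsilon=(1+\delta)^3$ these are exactly $\tfrac{M}{N(1+\delta)^6}\|x\|^2$ and $\tfrac{M(1+\delta)^6}{N}\|x\|^2$. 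A union bound over the two corollary events then gives the stated failure probability.

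I expect the main obstacle to be not any single estimate but the bookkeeping of the factors of $N$ and $(1+\delta)^3$: the Riesz bound is stated for the rows rescaled by $1/\sqrt N$ while the global frame bound is for the unscaled rows, and the Gram/frame spectrum identification must be invoked cleanly so that the two $(1+\delta)^3$ factors multiply to the claimed $(1+\delta)^6$. The measure-concentration heavy lifting is already done in the preceding lemmas, so the only genuinely new ingredient here is the observation that local near-orthonormality of each block converts the global frame bound into the fusion frame bound.
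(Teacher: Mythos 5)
Your proposal is correct and follows essentially the same route as the paper: couple the subspaces to row blocks of a single Gaussian matrix, invoke Corollary~\ref{cor:frame} for the global frame bounds and Corollary~\ref{thm:riesz_conc} for the blockwise Riesz bounds, and combine via a union bound. Your direct use of the Gram/frame-operator spectral identity to get $\frac{1}{1+\epsilon}\|P_k x\|^2 \le \frac1N\sum_{l\in J_k}|\langle x,\varphi_l\rangle|^2 \le (1+\epsilon)\|P_k x\|^2$ is just an explicit unpacking of the paper's step ``orthonormalizing the Riesz sequences changes the frame bounds by at most a factor of $(1+\delta)^{\pm 3}$.''
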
 
\begin{proof}
The preceding lemma implies that the random Gaussian $M\times N$ matrix $\frac 1 {\sqrt{N}} X$ is with high probability the analysis operator of a frame that is 
nearly $M/N$-tight, because
$$
   M/(N(1+\delta)^3) \|x\|^2 \le \frac{1}{N} \sum_{j=1}^M |\langle x, X_j \rangle |^2 \le M (1+\delta)^3/N 
$$
 for all $ x \in \mathbb R^N$ 
except for a set of probability $2(1+ \frac{4}{\delta})^N e^{-M\delta^2/4 + M \delta^3/3}$.
 
 Moreover, let $\{1, 2, \dots, M\}$ be partitioned into subsets $\{J_k\}_{k=1}^K$ of size $\max_i |J_i| \le s$,
then a union bound shows that
with probability bounded below by
$$
   1 -2  K (1+ \frac{4}{\delta})^s e^{-N\delta^2/4 + N \delta^3/3}
$$
each set of rescaled row vectors $\{X_j/\sqrt N \}_{j\in J_k}$  has upper and lower Riesz bounds
$(1+\delta)^3$ and $(1+\delta)^{-3}$, respectively.

Orthonormalizing the Riesz sequences then changes the frame bounds by at most a factor of $(1+\delta)^{\pm 3}$.
The resulting frame is by construction partitioned into orthonormal systems, and thus equivalent to a fusion frame
with the same frame bounds. 
Combining the probabilities for the failure of the frame bounds and of the Riesz bounds gives the stated bound.
\end{proof}

\section{Random fusion frames are nearly equiangular} \label{sec:equiangular}

Using a variation  of the strategy by Dasgupta and Gupta \cite{DG03}, we obtain measure concentration for
an average of projected norms, which implies that with a proper scaling of dimensions, random subspaces become nearly equiangular.

\begin{lemma}
Given $\{X_{j,l}: 1 \le j \le s, 1 \le l \le N\}$, independent identically standard-normal distributed random variables, $N\ge s$,
and $0< \beta < 1$ then
$$
     \mathbb P_\le \equiv \mathbb P(N\sum_{j=1}^s \sum_{l=1}^s X_{j,l}^2 \le \beta s^2 \sum_{l=1}^N X_{1,l}^2 )
     \le e^{s(s-1)(\ln \beta) /2 +s/2+(1-\beta)s^2/2\beta} \, .
$$
Similarly, if $\beta > 1$, then
$$
     \mathbb P _\ge \equiv \mathbb P(N\sum_{j=1}^s \sum_{l=1}^s X_{j,l}^2 \ge \beta s^2 \sum_{l=1}^N X_{1,l}^2 )
     \le e^{s(s-1)(\ln \beta) /2 +s/2+(1-\beta)s^2/2} \, .
$$
\end{lemma}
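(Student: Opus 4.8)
The plan is to reduce the statement to a Chernoff bound for a linear combination of \emph{independent} chi-square variables, after isolating the variables that the two sides share. The crucial observation is that the inner double sum $\sum_{j=1}^s\sum_{l=1}^s X_{j,l}^2$ and the row sum $\sum_{l=1}^N X_{1,l}^2$ are not independent: both contain $X_{1,1},\dots,X_{1,s}$. I would therefore introduce $C=\sum_{l=1}^s X_{1,l}^2$ (the shared block, $s$ degrees of freedom), $D=\sum_{j=2}^s\sum_{l=1}^s X_{j,l}^2$ (the part of the double sum disjoint from the first row, $s(s-1)$ degrees of freedom), and $E=\sum_{l=s+1}^N X_{1,l}^2$ (the tail of the first row, $N-s$ degrees of freedom). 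These three are independent, the left-hand double sum equals $C+D$, and the right-hand row sum equals $C+E$.

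For $\mathbb P_\le$ I would rewrite the event as $\beta s^2(C+E)-N(C+D)\ge 0$ and apply the exponential Markov inequality $\mathbb P(\,\cdot\,\ge 0)\le \mathbb E[e^{t(\,\cdot\,)}]$ with $t\ge 0$. Independence factors the expectation into three chi-square moment generating functions of the form $(1-2u)^{-k/2}$: the coefficient of $C$ is $t(\beta s^2-N)$ (this single factor carries the entire effect of the overlap, and its sign is indefinite), the coefficient of $E$ is $t\beta s^2$, and the coefficient of $D$ is $-tN$. I would then choose $t=\frac{1-\beta}{2N\beta}$ precisely so that $1+2tN=1/\beta$, which turns the $D$-factor into $\beta^{\,s(s-1)/2}$ and reproduces the dominant term $\tfrac{s(s-1)}{2}\ln\beta$ exactly.

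The remaining work is to estimate the two surviving factors against $\tfrac{s}{2}+\tfrac{(1-\beta)s^2}{2\beta}$. Writing $\gamma=\frac{(1-\beta)s^2}{N}$, the $E$-factor contributes $-\tfrac{N-s}{2}\ln(1-\gamma)\le \tfrac{N}{2}\cdot\frac{\gamma}{1-\gamma}$, and the bound $1-\gamma\ge\beta$ (which holds once $N\ge s^2$) collapses this to $\tfrac{(1-\beta)s^2}{2\beta}$; this is exactly where the factor $1/\beta$ in the target originates. The $C$-factor, together with the leftover $\tfrac{s}{2}\ln\beta$, assembles into $\tfrac{s}{2}\ln\frac{\beta}{1-\beta\gamma}$, which is at most $\tfrac{s}{2}$ because $\frac{\beta}{1-\beta\gamma}\le e$ in this regime. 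Summing the three contributions yields the claimed exponent. The case $\mathbb P_\ge$ is entirely parallel with $t=\frac{\beta-1}{2N\beta}$: the only structural change is that the $E$-coefficient is now negative, so I would bound $\ln(1+\gamma')\ge \gamma'-\gamma'^2/2$ for $\gamma'=\frac{(\beta-1)s^2}{N}$ rather than estimating $-\ln(1-\gamma)$, and no factor of $1/\beta$ is produced — this explains why the second bound carries $\tfrac{(1-\beta)s^2}{2}$ in place of $\tfrac{(1-\beta)s^2}{2\beta}$.

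The main obstacle is not any single estimate but the need to respect the coupling: one must resist treating the two sums as independent and instead track the single mixed factor in $C$ with its indefinite-sign coefficient $t(\beta s^2-N)$. A secondary point of care is keeping $t$ inside the region where all three moment generating functions converge, i.e. $2t\beta s^2<1$; with the choice above this is the constraint $N>(1-\beta)s^2$, so the clean estimates (in particular $1-\gamma\ge\beta$) really use that $N$ is large compared to $s^2$, consistent with the $\sqrt{N/s}$-type quantities appearing in the equiangularity theorem.
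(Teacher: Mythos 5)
Your setup is exactly the paper's: both proofs apply the exponential Markov inequality to $t\bigl(\beta s^2\sum_l X_{1,l}^2 - N\sum_{j,l}X_{j,l}^2\bigr)$ and factor the expectation over the three independent blocks you call $C$, $D$, $E$, with the mixed coefficient $t(\beta s^2-N)$ on the shared block. The divergence is in the choice of $t$, and yours creates a genuine gap. You fix $t=\frac{1-\beta}{2N\beta}$ so that the $D$-factor is exactly $\beta^{s(s-1)/2}$, which forces $2t\beta s^2=(1-\beta)s^2/N$ in the $E$-block; the moment generating function $(1-2t\beta s^2)^{-(N-s)/2}$ then diverges whenever $N\le(1-\beta)s^2$, and your key estimate $1-\gamma\ge\beta$ requires $N\ge s^2$. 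The lemma assumes only $N\ge s$, and --- contrary to your closing remark --- the regime in which the paper actually invokes this lemma (via the corollary and the equiangularity theorem) is $s/N\to c>0$, i.e.\ $s^2/N\to\infty$, so your proof fails precisely where the result is used. The same problem recurs in your sketch for $\mathbb P_\ge$: there the MGFs do converge, but with $N\asymp s$ one has $\gamma'=(\beta-1)s^2/N\asymp s$, the expansion $\ln(1+\gamma')\ge\gamma'-\gamma'^2/2$ is vacuous, and the leftover terms $\frac{s\gamma'}{2}+\frac{(N-s)\gamma'^2}{4}$ are of order $s^3$ and cannot be absorbed into the $s/2$ slack.

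The paper avoids this by not insisting on an exact $D$-factor. It first dominates the mixed $C$-factor by an $E$-type factor, $\mathbb E[e^{t(\beta s^2-N)X_{1,1}^2}]^s\le\mathbb E[e^{t\beta s^2 X_{1,1}^2}]^s$ for $t\ge 0$, leaving the two-factor expression $(1-2t\beta s^2)^{-N/2}(1+2tN)^{-s(s-1)/2}$, and then optimizes $t$ exactly, obtaining $t=\frac{(1-\beta)s-1}{2\beta s(N+s(s-1))}$. This $t$ scales like $(N+s^2)^{-1}$ rather than $N^{-1}$, keeps $2t\beta s^2<1$ for every $N\ge s$, and elementary estimates (in particular $(1-1/s)^{-s(s-1)/2}\le e^{s/2}$ and $\frac{s(s-1)+N}{N+\beta s^2}\le\frac1\beta$) then yield the stated exponent with no hypothesis beyond $N\ge s$. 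To repair your argument you would need to re-optimize $t$ over all three factors, or adopt the paper's merging step; as written your proof establishes the lemma only under the additional assumption that $N$ is at least of order $s^2$.
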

\begin{proof}
The first probability under consideration is equal to
\begin{align*}
   &\mathbb P ( e^{\beta s^2 \sum_{l=1}^N X_{1,l}^2 - N \sum_{j,l=1}^s X_{j,l}^2} \ge 1 )\\
  & \le \mathbb E [e^{t(\beta s^2 \sum_{l=1}^N X_{1,l}^2 - N \sum_{j,l=1}^s X_{j,l}^2)}]\\
   & = \mathbb E[ e^{t \beta s^2 X_{1,1}^2}]^{N-s}  \mathbb E[ e^{t (\beta s^2 -N) X_{1,1}^2}]^{s} 
    \mathbb E[ e^{-t N X_{1,1}^2}]^{s(s-1)}  \\
    &\le \mathbb E[ e^{t \beta s^2 X_{1,1}^2} ]^{N}  \mathbb E[ e^{-t N X_{1,1}^2}]^{s(s-1)}  
    = (1-2 t \beta s^2)^{-N/2} (1+2 t N)^{-s(s-1)/2}
\end{align*}
The right hand side is minimized with
$$
   t = \frac{(1-\beta)s-1}{2 \beta s(N+s(s-1))}
$$
and inserting this in the expression
\begin{align*}
    \mathbb P_\le & \le \left( \frac{N + \beta s^2}{N+(s-1)s} \right)^{-N/2} \left( \frac{(s-1)(N + \beta s^2)}{\beta s (N+(s-1)s)} \right)^{-s(s-1)/2} \\
     & = (N+(s-1)s)^{s(s-1)/2+N/2} (N+\beta s^2)^{-s(s-1)/2-N/2}\left( \frac{s-1}{\beta s} \right)^{-s(s-1)/2} \\
     & \le \left(1 + \frac{(1-\beta)s^2}{N+\beta s^2} \right)^{s(s-1)/2+N/2} \left(1-\frac 1 s \right)^{-s(s-1)/2}  \beta^{s(s-1)/2} \\
      & \le \beta^{s(s-1)/2} e^{s/2} e^{(1-\beta)s^2(s(s-1)/2+N/2)/( N+\beta s^2)} \le \beta^{s(s-1)/2} e^{s/2} e^{(1-\beta)s^2/2\beta}\\
      & = e^{s(s-1)(\ln \beta) /2 +s/2+(1-\beta)s^2/2\beta}
\end{align*}
If $\beta >1$, then we get
\begin{align*}
   &\mathbb P ( e^{\beta s^2 \sum_{l=1}^N X_{1,l}^2 - N \sum_{j,l=1}^s X_{j,l}^2} \le 1 )\\
    &\le 
     (1+2 t \beta s^2)^{-N/2} (1-2 t N)^{-s(s-1)/2}
\end{align*}
Now the optimal choice for $t$ is
$$
  t = - \frac{(1-\beta)s-1}{2 \beta s(N+s(s-1))}
$$ 
and the result follows 
\begin{align*}
    \mathbb P_\ge & \le \left( \frac{N + \beta s^2}{N+(s-1)s} \right)^{-N/2} \left( \frac{(s-1)(N + \beta s^2)}{\beta s (N+(s-1)s)} \right)^{-s(s-1)/2} \\
      & \le \beta^{s(s-1)/2} e^{s/2} e^{(1-\beta)s^2(s(s-1)/2+N/2)/( N+\beta s^2)} \le \beta^{s(s-1)/2} e^{s/2} e^{(1-\beta)s^2/2}\\
      & = e^{s(s-1)(\ln \beta) /2 +s/2+(1-\beta)s^2/2} \, .
\end{align*}
\end{proof}

Setting $t=1+\delta$ or $t=(1+\delta)^{-1}$ gives the following estimate.

\begin{corollary}
Given $\{X_{j,l}: 1 \le j \le s, 1 \le l \le N\}$, independent identically standard-normal distributed random variables,  $N \ge s$
and $\epsilon>0$, then
\begin{align*}
    \mathbb P(N\sum_{j=1}^s \sum_{l=1}^s X_{j,l}^2 \le \frac{1}{1+\delta} s^2 \sum_{l=1}^N X_{1,l}^2 )
     & \le e^{-s(s-1)(\ln (1+\delta)) /2 +s/2+\delta s^2/2}\\
    &  \le e^{(1+\delta)s/2-s(s-1)(\delta^2/2-\delta^3/3)/2}\, 
\end{align*}
and
\begin{align*}
    \mathbb P(N\sum_{j=1}^s \sum_{l=1}^s X_{j,l}^2 \ge ({1+\delta}) s^2 \sum_{l=1}^N X_{1,l}^2 )
     & \le e^{s(s-1)(\ln (1+\delta)) /2 +s/2-\delta s^2/2}\\
    &  \le e^{(1+\delta)s/2-s(s-1)(\delta^2/2-\delta^3/3)/2}\, .
\end{align*}
\end{corollary}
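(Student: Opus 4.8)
The plan is to read off both inequalities from the preceding lemma by making the two obvious substitutions for $\beta$ and then tidying up the exponents with elementary estimates for the logarithm; no fresh probabilistic input is required. Since $\delta>0$ forces $(1+\delta)^{-1}<1$ and $1+\delta>1$, these two choices land in exactly the two regimes $0<\beta<1$ and $\beta>1$ that the lemma treats separately, so each inequality of the corollary is the image of one half of the lemma.

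For the first inequality I would set $\beta=(1+\delta)^{-1}$ in the bound for $\mathbb P_\le$. Using $\ln\beta=-\ln(1+\delta)$ and $(1-\beta)/\beta=\delta$, the exponent $\tfrac{s(s-1)}{2}\ln\beta+\tfrac s2+\tfrac{(1-\beta)s^2}{2\beta}$ becomes exactly $-\tfrac{s(s-1)}{2}\ln(1+\delta)+\tfrac s2+\tfrac{\delta s^2}{2}$, which is the first displayed estimate. For the second inequality I would instead set $\beta=1+\delta$ in the bound for $\mathbb P_\ge$; now $\ln\beta=\ln(1+\delta)$ and $1-\beta=-\delta$, so the exponent collapses to $\tfrac{s(s-1)}{2}\ln(1+\delta)+\tfrac s2-\tfrac{\delta s^2}{2}$, matching its first displayed form.

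It then remains to pass from each first line to the common closed expression $e^{(1+\delta)s/2-s(s-1)(\delta^2/2-\delta^3/3)/2}$, and this is where the only genuine work sits. I would feed in the elementary inequality $\ln(1+\delta)\le\delta-\tfrac{\delta^2}{2}+\tfrac{\delta^3}{3}$, valid for $0<\delta<1$ because the tail of the alternating series for $\ln(1+\delta)$ is negative beyond the cubic term, and then regroup the contributions of order $s(s-1)$, $s^2$, and $s$. For the $\ge$ estimate the logarithm enters with the sign this inequality controls, and after collecting terms one is left with a comparison of the shape $\delta\le\delta\,(s+1)/(s-1)$, which is immediate.

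The step I expect to fight is the reduction in the $\le$ direction. There the positive term $\tfrac{\delta s^2}{2}$, produced by the factor $(1-\beta)/\beta=\delta$, must be dominated by $-\tfrac{s(s-1)}{2}\ln(1+\delta)$, so the two contributions of order $s^2$ nearly cancel and the sign of the surviving quadratic coefficient is settled only at second order in $\delta$. Squeezing the clean negative quadratic term $-\tfrac{s(s-1)}{2}(\delta^2/2-\delta^3/3)$ out of this near cancellation is where all the care goes: the margin is itself quadratic in $\delta$ while the competing terms are of the same order, so any slack introduced in the passage from the lemma --- for instance in the convenient but lossy replacement of $(1-\beta)s^2(s(s-1)/2+N/2)/(N+\beta s^2)$ by $(1-\beta)s^2/(2\beta)$ --- must be tracked, together with the lower-order $s$ terms. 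The substitutions of the first two paragraphs are routine; this bookkeeping of the $O(s^2)$ terms in the $\le$ case is the crux.
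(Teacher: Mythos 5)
Your substitutions $\beta=(1+\delta)^{-1}$ and $\beta=1+\delta$ are exactly what the paper intends (its entire proof is the sentence ``Setting $t=1+\delta$ or $t=(1+\delta)^{-1}$ gives the following estimate''), and they do produce the two first-line exponents correctly. Your handling of the upper-tail case is also sound: there $\ln(1+\delta)\le\delta-\delta^2/2+\delta^3/3$ reduces everything to the harmless comparison you describe. The problem is precisely the step you defer to ``bookkeeping'' in the lower-tail case: it is not delicate, it fails. Writing out the required inequality
\begin{align*}
-\tfrac{s(s-1)}{2}\ln(1+\delta)+\tfrac s2+\tfrac{\delta s^2}{2}
\;\le\; \tfrac{(1+\delta)s}{2}-\tfrac{s(s-1)}{2}\bigl(\tfrac{\delta^2}{2}-\tfrac{\delta^3}{3}\bigr),
\end{align*}
the terms $\tfrac s2+\tfrac{\delta s}{2}$ cancel, and after dividing by $s(s-1)/2$ one is left needing
$\delta-\ln(1+\delta)\le-(\delta^2/2-\delta^3/3)$, i.e.\ $\ln(1+\delta)\ge\delta+\delta^2/2-\delta^3/3$, which is false for every $\delta>0$ since $\ln(1+\delta)<\delta$. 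Equivalently, the difference of the two exponents equals $\tfrac{s(s-1)}{2}\bigl[(\delta-\ln(1+\delta))+(\delta^2/2-\delta^3/3)\bigr]>0$: after the exact cancellation of the $O(s^2\delta)$ terms that you correctly flag, the surviving quadratic contribution $\tfrac{s(s-1)}{2}(\delta-\ln(1+\delta))\approx\tfrac{s(s-1)}{2}\cdot\tfrac{\delta^2}{2}$ enters with a plus sign, not the minus sign you need.

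Consequently your plan establishes the first displayed bound in each case and the full chain for the upper tail, but the second inequality of the first display cannot be obtained by comparing the two stated exponents, no matter how carefully the slack is tracked, because both exponents are already fixed numbers and the larger one is the first. (The paper gives no argument here either, and the displayed chain appears to be in error as written; any salvage of the final lower-tail bound would have to bypass the corollary's first line and return to the un-relaxed Chernoff expression inside the lemma's proof, before the factor $(1-\beta)s^2(N+s(s-1))/(2(N+\beta s^2))$ is replaced by the lossy $(1-\beta)s^2/(2\beta)$.)
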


For the next result  it is sometimes more convenient to work with random matrices whose column vectors are normalized. We 
will alternate between the random $N\times M$ matrix $X$ with standard normal entries and the random matrix $x$  which is obtained by
$$
    x_{j,l} = X_{j,l} /( \sum_{i=1}^N X_{i,l}^2 )^{1/2} \, 
$$
and consequently has column vectors that are independent, uniformly distributed on the unit sphere in $\mathbb R^N$.

\begin{thm} \label{thm:conc_proj}
Let $\{x_1, x_2, \dots, x_s\}$ be vectors in $\mathbb R^N$, drawn independently according to a uniform distribution on the unit sphere.
If $V$ is a fixed subspace of dimension $s<N$
and $P_V$ is the orthogonal projection onto $V$, then
$$
   \mathbb P (\frac{1}{1+\delta} \le \sum_{i=1}^s \frac{N}{s^2}\|P_V x_i\|^2 
   \le 1+\delta )
   \ge 1-2s e^{(1+\delta)s/2-s(s-1)(\delta^2/2-\delta^3/3)/2} .
$$
\end{thm}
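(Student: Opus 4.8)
The plan is to reduce the two-sided estimate to the single-denominator concentration bound of the preceding corollary by an exchangeability-plus-union-bound argument. First I would use the unitary invariance of the uniform distribution on the sphere to assume without loss of generality that $V=\mathrm{span}\{e_1,\dots,e_s\}$, so that $\|P_V x_i\|^2$ is simply the sum of the squares of the first $s$ coordinates of $x_i$. Writing each $x_i$ as the normalization $X_i/\|X_i\|$ of an independent standard Gaussian vector $X_i=(X_{i,l})_{l=1}^N$, as in the passage preceding the theorem, I set $A_i=\sum_{j=1}^s X_{i,j}^2$ and $B_i=\sum_{l=1}^N X_{i,l}^2$, so that
$$
  \sum_{i=1}^s \frac{N}{s^2}\|P_V x_i\|^2 = \frac{N}{s^2}\sum_{i=1}^s \frac{A_i}{B_i} \, .
$$

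The crucial step is to decouple the $s$ distinct denominators $B_i$. For the upper tail I would use the crude but effective bound
$$
  \sum_{i=1}^s \frac{A_i}{B_i} \le \frac{\sum_{i=1}^s A_i}{\min_{1\le k\le s} B_k} = \max_{1\le k\le s} \frac{\sum_{i=1}^s A_i}{B_k} \, ,
$$
and for the lower tail the analogous bound with $\max_k B_k$ and $\min_k$. The point is that $\sum_{i=1}^s A_i=\sum_{i,j=1}^s X_{i,j}^2$ is exactly the full $s\times s$ block sum appearing in the numerator of the previous corollary, and for each fixed $k$ the ratio $\bigl(\sum_{i=1}^s A_i\bigr)/B_k$ has precisely the distribution controlled there: the numerator is invariant under permutations of the rows while $B_k$ is carried to $B_1$ by the transposition of rows $1$ and $k$, so by row-exchangeability of the Gaussian matrix all these ratios are identically distributed.

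With this in hand the tail probabilities follow from a union bound over the index $k$. For the upper bound,
$$
  \mathbb P\Bigl( \max_{1\le k\le s} \frac{\sum_{i=1}^s A_i}{B_k} \ge (1+\delta)\frac{s^2}{N}\Bigr) \le s\, \mathbb P\Bigl( N\sum_{i,j=1}^s X_{i,j}^2 \ge (1+\delta) s^2 \sum_{l=1}^N X_{1,l}^2\Bigr) \, ,
$$
and the corollary bounds the right-hand factor by $e^{(1+\delta)s/2-s(s-1)(\delta^2/2-\delta^3/3)/2}$; the lower tail is handled identically with $\beta=1/(1+\delta)$. Adding the two failure probabilities yields the claimed factor $2s$ in front of the exponential.

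I expect the main obstacle to be making the decoupling step airtight rather than any hard estimate: one must check that replacing the individual denominators by a single extremal one does not lose the correct constant, and that the exchangeability identification of the distribution of $\bigl(\sum_i A_i\bigr)/B_k$ with the corollary's quantity is exact. The underlying reason the centering is correct is that $\mathbb E\|P_V x_i\|^2 = s/N$, so that $\frac{N}{s^2}\sum_{i=1}^s \|P_V x_i\|^2$ has mean one and the two-sided bound is genuinely a concentration statement about a quantity of order one.
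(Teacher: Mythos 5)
Your proposal is correct and follows essentially the same route as the paper: reduce to normalized Gaussian rows, replace the $s$ individual denominators $B_k$ by $\min_k B_k$ (resp.\ $\max_k B_k$), invoke row-exchangeability so that each ratio $\bigl(\sum_{i,j} X_{i,j}^2\bigr)/B_k$ is governed by the preceding corollary, and take a union bound over $k$ and over the two tails to obtain the factor $2s$. The paper phrases the same argument as the event $\frac{1}{1+\delta}\max_j Z_j \le \frac{N}{s^2}\sum_{j,l} X_{j,l}^2 \le (1+\delta)\min_j Z_j$ followed by $\min_i Z_i \le Z_j \le \max_i Z_i$, which is exactly your decoupling step.
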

\begin{proof}
This follows from the fact that mapping a standard normal Gaussian random vector 
to the unit vector in its span leads to a uniform distribution on the sphere. The preceding probability
estimates are unchanged by scaling the vectors on both sides. 
Let $Z_j = \sum_{l=1}^N X_{j,l}^2$ then the theorem we wish to prove is equivalent to
 $$
  \mathbb P(\frac{1}{1+\delta}   \le \frac{N}{s^2} \sum_{j=1}^s \frac{1}{Z_j} \sum_{l=1}^s X_{j,l}^2 \le (1+\delta)  )
  \ge 1  -2s e^{(1+\delta)s/2-s(s-1)(\delta^2/2-\delta^3/3)/2} .
$$
To deduce this, we 
use a union bound again, which implies with the preceding estimates that
\begin{align*}
  &\mathbb P( \frac{1}{1+\delta} \max_j  Z_j \le \frac{N}{s^2} \sum_{j=1}^s \sum_{l=1}^s X_{j,l}^2 \le (1+\delta)  \min_j  Z_j )\\
  &\ge 1  -2s e^{(1+\delta)s/2-s(s-1)(\delta^2/2-\delta^3/3)/2} .
\end{align*}
Now $\min_i \sum_l X_{i,l}^2 \le Z_j \le \max_i \sum_l X_{i,l}^2$ establishes the bound.
\end{proof}

Next, we prepare the result on the equiangularity of random fusion frames. 

\begin{lemma}
Let $M=Ks$, $M \ge N\ge s$, $0<\delta<1$, $\epsilon=(1+\delta)^3-1$ and $\{x_i\}_{i=1}^M$ be independent random vectors  in $\mathbb R^N$
that are uniformly distributed on the unit sphere. Let $\{P_k\}_{k=1}^K$
be the orthogonal projection onto the span of $\{x_i \}_{i \in J_k}$, where $\{J_k\}$ partitions 
the index set and each $J_k$ has size $s$, then for fixed $k\ne l$,
\begin{align*}
  & \mathbb P ( \frac{1}{1+\epsilon} - \epsilon \sqrt \frac{ (1+\epsilon)N}{ s} \le \frac{N}{s^2} \tr[P_l P_k] 
  \le 1+\epsilon(1+ \sqrt{\frac{(1+\epsilon)N}{s}})+\frac{N \epsilon^2}{4 s}  )\\
  & \ge 1 - R_1 -R_2 
  \end{align*}
  where the failure probability is bounded by the sum of
  $$
     R_1 = 2s e^{(1+\delta)s/2-s(s-1)(\delta^2/2-\delta^3/3)/2} \, .
    $$ 
  
  and
  $$
   R_2 = 2 (M+K (1+ \frac{4}{\delta})^s) e^{-N\delta^2/4 + N \delta^3/3} + 2  (1+ \frac{4}{\delta})^N e^{-M\delta^2/4 + M \delta^3/3} \, .
  $$
\end{lemma}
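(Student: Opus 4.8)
The plan is to reduce the Hilbert--Schmidt inner product to a sum of projected norms of the type already controlled by Theorem~\ref{thm:conc_proj}, and to charge the deviation from orthonormality of the generating vectors to a single orthonormalization defect. Writing $\|A\|_{\mathrm{HS}}=(\tr[A^*A])^{1/2}$ and letting $U$ be the $N\times s$ matrix whose columns $u_1,\dots,u_s$ form an orthonormal basis of $V_k=\linspan\{x_i\}_{i\in J_k}$, one has $P_k=UU^*$ and hence
$$
  \tr[P_lP_k]=\tr[U^*P_lU]=\|P_lU\|_{\mathrm{HS}}^2 .
$$
If $F$ denotes the generating matrix that $U$ orthonormalizes, the triangle inequality in $\|\cdot\|_{\mathrm{HS}}$ will split $\|P_lU\|_{\mathrm{HS}}$ into a main part $\|P_lF\|_{\mathrm{HS}}$ and a defect $\|P_l(U-F)\|_{\mathrm{HS}}\le\|U-F\|_{\mathrm{HS}}$, and squaring after this split is exactly what produces the cross term $\epsilon\sqrt{(1+\epsilon)N/s}$ together with the quadratic term $N\epsilon^2/(4s)$.

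First I would fix the event of probability at least $1-R_2$ on which the deterministic structure holds: by Corollary~\ref{thm:riesz_conc} every block is $\epsilon$-Riesz, by Corollary~\ref{cor:norm} all $M$ column norms concentrate, and by Corollary~\ref{cor:frame} the whole system is a frame; these are the three contributions to $R_2$, and on this event each $V_k,V_l$ genuinely has dimension $s$, so that the projections and the orthonormalization are well defined. Take $F$ to be the $N\times s$ matrix with columns $X_i/\sqrt N$, $i\in J_k$, which form an $\epsilon$-Riesz sequence, so that the Gram matrix $G=F^*F$ has eigenvalues $\lambda_a\in[(1+\epsilon)^{-1},1+\epsilon]$ and $U=FG^{-1/2}$. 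A short computation in the eigenbasis of $G$ gives
$$
  \|U-F\|_{\mathrm{HS}}^2=\sum_{a=1}^s(1-\sqrt{\lambda_a})^2\le s\,(\sqrt{1+\epsilon}-1)^2\le \tfrac14 s\,\epsilon^2 ,
$$
using $\sqrt{1+\epsilon}-1\le\epsilon/2$.

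Next I would control the main part. Since $\|P_lF\|_{\mathrm{HS}}^2=\tfrac1N\sum_{i\in J_k}\|P_lX_i\|^2$ equals $\sum_{i\in J_k}\|P_lx_i\|^2$ up to the column-norm factors confined by Corollary~\ref{cor:norm} to $[(1+\delta)^{-1},1+\delta]$, I condition on the block-$l$ vectors so that $V_l$ is a fixed $s$-dimensional subspace and apply Theorem~\ref{thm:conc_proj} to the independent uniform vectors $\{x_i\}_{i\in J_k}$; off an event of probability $R_1$ this yields $\tfrac{N}{s^2}\sum_{i\in J_k}\|P_lx_i\|^2\in[(1+\delta)^{-1},1+\delta]$, hence $\sqrt{\tfrac{N}{s^2}}\,\|P_lF\|_{\mathrm{HS}}\le\sqrt{1+\epsilon}$ and $\ge(1+\epsilon)^{-1/2}$. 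Combining with the defect bound,
$$
  \sqrt{\tfrac{N}{s^2}}\,\|P_lU\|_{\mathrm{HS}}\le\sqrt{1+\epsilon}+\tfrac{\epsilon}{2}\sqrt{\tfrac Ns},
$$
and squaring gives the claimed upper bound; the reverse triangle inequality and squaring, discarding the nonnegative quadratic remainder, give the lower bound. A union bound over the two bad events produces the failure probability $R_1+R_2$.

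The step I expect to be the main obstacle is extracting the precise constants rather than a merely qualitative estimate. The factor $\tfrac14$ in $N\epsilon^2/(4s)$ and the factor $(1+\epsilon)^{1/2}$ in the cross term hinge on the sharp defect bound $\|U-F\|_{\mathrm{HS}}\le\tfrac12\sqrt s\,\epsilon$, which in turn rests on passing to square roots of the Gram eigenvalues and on the elementary inequality $\sqrt{1+\epsilon}-1\le\epsilon/2$; a naive Cauchy--Schwarz estimate of $\tr[P_l(P_k-S_k)]$ with $S_k=\sum_{i\in J_k}x_ix_i^*$ only yields an error of order $N\epsilon/s$, which is too weak. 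The other delicate point is the bookkeeping of normalizations: Corollary~\ref{thm:riesz_conc} furnishes the Riesz constant for the unnormalized columns $X_i/\sqrt N$, whereas Theorem~\ref{thm:conc_proj} is stated for unit vectors, so the column-norm concentration of Corollary~\ref{cor:norm} must be threaded through both the main part and the defect so that the single parameter $\epsilon=(1+\delta)^3-1$ comes out with the right powers of $1+\delta$.
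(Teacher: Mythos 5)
Your proposal is correct and takes essentially the same route as the paper: the paper likewise writes $\tr[P_lP_k]=\sum_{i\in J_k}\|P_l(S^\dagger)^{1/2}x_i\|^2$, splits off the main term $Q_1=\sum_{i}\|P_lx_i\|^2$ (controlled by Theorem~\ref{thm:conc_proj}) from the orthonormalization defect $Q_3\le s\epsilon^2/4$ (controlled by the Riesz bounds from Corollary~\ref{thm:riesz_conc}), and bounds the cross term by Cauchy--Schwarz, which is exactly your Hilbert--Schmidt triangle-inequality decomposition written vector by vector. The only cosmetic difference is that you orthonormalize via $G^{-1/2}$ in the Gram eigenbasis rather than via $(S^\dagger)^{1/2}$ applied to each unit vector, yielding the same defect bound $\tfrac14 s\epsilon^2$ and the same constants.
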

\begin{proof}
A random, uniformly distributed 
$s$-dimensional subspace
is realized by taking the span of independent, identically uniformly distributed random unit vectors $\{x_i\}_{i=1}^s$.
Correspondingly, the orthogonal projection is obtained via orthonormalizing these random vectors with the square root of the pseudoinverse
$S^\dagger$ of $S=\sum_{j=1}^s x_j \otimes x_j^*$.
As a consequence, the trace is identical to
$$
\tr[P_l P_k] = \sum_{j=1}^s \tr[P_l (S^\dagger)^{1/2}  x_j \otimes x_j^* (S^\dagger)^{1/2} ] = \sum_{j=1}^s \|P_l (S^\dagger)^{1/2} x_j \|^2 \, .
$$
With the triangle inequality, we split this expression into three parts that we estimate separately,
\begin{align*}
 Q_1 - Q_2 + Q_3 \le 
         \sum_{i \in J_k} \| P_l (S^\dagger)^{1/2} x_i \|^2 \le
         Q_1 + Q_2 + Q_3
 \end{align*}
 with
 $$
    Q_1 = \sum_{i \in J_k} \| P_l x_i \|^2 , \quad Q_2 = 2 \sum_{i \in J_k}  \| P_l x_i \| \|P_l  ((S^\dagger)^{1/2} x_i - x_i) \| 
 $$
 and
 $$
    \, \quad Q_3 = \sum_i \|P_l  ((S^\dagger)^{1/2} x_i - x_i) \|^2 \, .
 $$
 The Cauchy-Schwarz inequality gives $Q_2 \le 2 ( Q_1 Q_3)^{1/2}$ so it is enough to control $Q_1$ and $Q_3$.
 
 The quantity $Q_1$ is concentrated near $s^2/N$ by Theorem~\ref{thm:conc_proj}, which gives a lower probability bound of $1-R_1$ for
 the set with $\frac{1}{1+\epsilon} \le \frac{1}{1+\delta} \le NQ_1/s^2 \le 1+\delta \le 1+\epsilon$.
 The third quantity
 is with probability $1-R_2$ small by Theorem~\ref{thm:riesz_conc}, because if $\epsilon = (1+\delta)^3-1$
 then $\frac{1}{1+\epsilon} \le S \le 1+\epsilon$ and if $\|x\|=1$, then
 $$
      \| (S^\dagger)^{1/2} x - x \|^2 \le (\frac{1}{\sqrt{1+\epsilon}} - 1)^2  \le (\sqrt{1+\epsilon} - 1)^2 \le \epsilon^2/4 \, .
 $$
 Thus, apart from a set of probability given in Theorem~\ref{thm:riesz_conc}, $Q_3 \le s \epsilon^2/4$ and $2(Q_1 Q_3)^{1/2} \le (Q_1 s)^{1/2} \epsilon$.
  
  Next, if $\frac{1}{1+\epsilon} \frac{s^2}{N} \le Q_1 \le (1+\epsilon) \frac{s^2}{N}$ then
  $$
     \frac{1}{1+\epsilon} \frac{s^2}{N} - (1+\epsilon)^{1/2} \frac{s^{3/2}\epsilon}{\sqrt N} \le Q_1 - Q_2 \le \sum_{i \in J_k} \| P_l (S^\dagger)^{1/2} x_i \|^2
  $$
  and
  $$
    \sum_{i \in J_k} \| P_l (S^\dagger)^{1/2} x_i \|^2 \le Q_1 + Q_2 + Q_3 \le (1+\epsilon) \frac{s^2}{N} + (1+\epsilon)^{1/2} \frac{s^{3/2}\epsilon }{\sqrt N} + s \epsilon^2/4 \, .
  $$
 \end{proof}

By a union bound over all pairings of subspaces, we get the following, more qualitative estimate:

\begin{thm} Let $K,s,N \in \mathbb N$, $M=Ks \ge N \ge s$.
For any $c>1$ there exists $\epsilon_0>0$ such that for all $0<\epsilon<\epsilon_0$, if 
 $\{V_k\}_{k=1}^K$ is a family of $s$-dimensional subspaces selected independently at random according to the  unitarily invariant distribution 
 in $\mathbb R^N$
and $\{P_k\}_{k=1}^K$ the corresponding orthogonal projections, then
the set
$$
 \mathcal X = \{1 - c \epsilon(1+ \sqrt{\frac N s}) \le \frac{N}{s^2}  \tr [ P_k P_l ]\le 1+ c \epsilon(1+\sqrt{\frac N s}) +\frac{N\epsilon^2}{4s} \mbox{ for all } k\ne l  \}
$$
has probability
\begin{align*}
   \mathbb P (\mathcal X)
 \ge
  1- (R_1+R_2)K(K-1)/2
\end{align*}
with $R_1$ and $R_2$ as in the preceding lemma.
\end{thm}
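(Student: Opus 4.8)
The plan is to deduce this directly from the preceding lemma by a union bound, after first checking a purely deterministic fact: the clean two-sided interval appearing in $\mathcal X$, with its free constant $c>1$, swallows the sharper but more cumbersome interval produced by the lemma once $\epsilon$ is small enough. For a single fixed pair $k\ne l$ the lemma confines $\frac{N}{s^2}\tr[P_kP_l]$, with probability at least $1-R_1-R_2$, to
$$
\left[ \frac{1}{1+\epsilon} - \epsilon\sqrt{\frac{(1+\epsilon)N}{s}},\ 1+\epsilon\Bigl(1+\sqrt{\frac{(1+\epsilon)N}{s}}\Bigr)+\frac{N\epsilon^2}{4s}\right],
$$
whereas the target interval in $\mathcal X$ is
$$
\left[ 1-c\epsilon\Bigl(1+\sqrt{\tfrac N s}\Bigr),\ 1+c\epsilon\Bigl(1+\sqrt{\tfrac N s}\Bigr)+\frac{N\epsilon^2}{4s}\right].
$$
So the first step is to show that the lemma's interval lies inside the target interval for all admissible $N,s$ once $\epsilon<\epsilon_0$, with $\epsilon_0$ depending only on $c$.

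For the upper endpoints, after cancelling the common $1$ and $\frac{N\epsilon^2}{4s}$ and dividing by $\epsilon>0$, the required inequality becomes $1+\sqrt{1+\epsilon}\,\sqrt{N/s}\le c(1+\sqrt{N/s})$, which rearranges to $(c-1)+(c-\sqrt{1+\epsilon})\sqrt{N/s}\ge 0$. Choosing $\epsilon_0=c^2-1>0$ forces $\sqrt{1+\epsilon}\le c$ for $\epsilon<\epsilon_0$, so both summands are nonnegative and the bound holds \emph{uniformly in the ratio} $N/s$. For the lower endpoints, dividing by $\epsilon$ and using $\tfrac{1}{1+\epsilon}-1=\tfrac{-\epsilon}{1+\epsilon}$ reduces the claim to $c+(c-\sqrt{1+\epsilon})\sqrt{N/s}\ge \tfrac{1}{1+\epsilon}$; with the same $\epsilon_0$ the left side is at least $c>1\ge\tfrac{1}{1+\epsilon}$. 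Hence for every $0<\epsilon<\epsilon_0$ the containment holds and is independent of $N$ and $s$.

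With $\epsilon_0=c^2-1$ fixed, the second step is the union bound. By the symmetry $\tr[P_kP_l]=\tr[P_lP_k]$ there are $\binom{K}{2}=K(K-1)/2$ distinct pairs, and for each the lemma gives failure probability at most $R_1+R_2$; by the containment just established, failure of the target estimate for a given pair is contained in the lemma's failure event for that pair. Summing over all pairs bounds $\mathbb P(\mathcal X^c)$ by $(R_1+R_2)K(K-1)/2$, which is exactly the assertion.

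The union bound and the algebra are both routine; the only point that needs care is the \textbf{uniformity in $N/s$} in the containment step, since $N/s$ is not bounded a priori under the hypothesis $M=Ks\ge N\ge s$. This is precisely what forces the free constant $c>1$ into the statement: the coefficient of $\sqrt{N/s}$ can be arbitrarily large, so the discrepancy between $\sqrt{1+\epsilon}$ and $1$ there cannot be absorbed by shrinking $\epsilon$ alone, but it is controlled once $c-\sqrt{1+\epsilon}\ge 0$. Beyond verifying these two elementary inequalities I expect no genuine obstacle.
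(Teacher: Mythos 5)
Your proposal is correct and follows essentially the same route as the paper: verify that for $\epsilon$ small (depending only on $c$) the lemma's interval is contained in the cleaner target interval, uniformly in the ratio $N/s$, and then apply a union bound over the $K(K-1)/2$ pairs. Your explicit choice $\epsilon_0=c^2-1$ with the direct algebraic check is, if anything, cleaner than the paper's appeal to a series-expansion remainder; the only small caveat is that $\epsilon_0$ should also be capped so that the preceding lemma (which uses $\epsilon=(1+\delta)^3-1$ with $0<\delta<1$) remains applicable.
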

\begin{proof}
If $\epsilon$ is sufficiently small, then the
remainder for the series expansion,  $1/(1+\epsilon)-1+\epsilon+(\sqrt{1+\epsilon}-1-\epsilon/2)\sqrt{N/s}$ 
is bounded by $(c-1)\epsilon+c\epsilon\sqrt{N/s}$. A union bound for the $K(K-1)/2$ pairings gives
the probability bound.
\end{proof}

%

\begin{corollary}
Let $K,N,s$ and $\delta$ be as above. 
If $N,s\to \infty$, $s/N\to c>0$, and $K$ is such that
$$
   \frac{3\ln (K+1)}{N} + \frac{s}{N} \ln (1+ \frac 4 \delta ) < \delta^2/4-\delta^3/3
$$
and
$$
   \frac{N}{Ks} \ln (1+ \frac 4 \delta ) < \delta^2/4-\delta^3/3
$$
then the upper bound $R_1+R_2$ for the probability of $\mathcal X$ 
in the preceding theorem decays exponentially in $N$.
\end{corollary}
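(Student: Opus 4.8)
The plan is to control the full failure probability $(R_1+R_2)K(K-1)/2$ furnished by the preceding theorem (this, rather than $R_1+R_2$ alone, is what must tend to $0$ for $\mathbb P(\mathcal X)\to 1$) and to show that its logarithm grows at most linearly in $N$ with a strictly negative leading coefficient. Abbreviate $\gamma=\delta^2/4-\delta^3/3$, which is positive precisely because the right-hand sides of the two hypotheses are, and $\lambda=\ln(1+4/\delta)$. After bounding $K(K-1)/2\le K^2/2$ and recalling $M=Ks$, the product $(R_1+R_2)K^2/2$ splits into the finitely many elementary terms
\[
 T_0=sK^2 e^{(1+\delta)s/2-s(s-1)(\delta^2/2-\delta^3/3)/2},\quad T_1=sK^3 e^{-N\gamma},\quad T_2=K^3(1+4/\delta)^s e^{-N\gamma},\quad T_3=K^2(1+4/\delta)^N e^{-Ks\gamma},
\]
arising from $R_1$ and from the three summands of $R_2$. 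Since a sum of finitely many nonnegative terms decays exponentially as soon as each summand does, with rate the least negative of the individual rates, it suffices to verify $\limsup_N \frac1N\ln T_i<0$ for each $i$.

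For $T_0$ the exponent is dominated by $-s(s-1)(\delta^2/2-\delta^3/3)/2\sim -c^2N^2(\delta^2/4-\delta^3/6)$, which is negative and quadratic in $N$, while $\ln(sK^2)=O(N)$ by the first hypothesis; hence $\frac1N\ln T_0\to-\infty$ with no further assumption, and $T_0$ decays super-exponentially. For $T_1$ and $T_2$ I would invoke the first hypothesis. Writing $\kappa=\limsup_N \frac{\ln K}{N}$ (and noting $\frac{\ln(K+1)}{N}$ has the same limit behaviour), the first hypothesis gives $3\kappa+c\lambda<\gamma$ in the limit, so $\frac1N\ln T_2\to 3\kappa+c\lambda-\gamma<0$ directly, and $\frac1N\ln T_1\to 3\kappa-\gamma\le 3\kappa+c\lambda-\gamma<0$ because $c\lambda>0$. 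The coefficient $3$ on $\ln(K+1)$ in the hypothesis is exactly what absorbs the cube $K^3$ in $T_1$ and $T_2$: two factors from the pairing count $K(K-1)/2$, and one more from $M=Ks$ in the first summand of $R_2$, respectively from the explicit $K$ in its second summand.

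The term $T_3$ is the main obstacle, and it is here that the second hypothesis is used, via a case distinction on the growth of $K$. Its log-rate is $\frac1N\ln T_3=\frac{2\ln K}{N}+\lambda-\frac{Ks}{N}\gamma$, and the second hypothesis is exactly the statement $\lambda-\frac{Ks}{N}\gamma<0$ (it reads $N\lambda/(Ks)<\gamma$). If $K$ stays bounded, then $\kappa=0$, the prefactor $\frac{2\ln K}{N}\to 0$, and the second hypothesis alone forces $\frac1N\ln T_3\to\lambda-Kc\gamma<0$. If instead $K\to\infty$, then $\frac{Ks}{N}\to\infty$, so the exponent $-Ks\gamma$ eventually overwhelms both $\lambda N$ and $2\ln K$ --- even when $K$ grows exponentially so that $\kappa>0$, since then $Ks\gamma$ grows like $e^{\kappa N}$ whereas $2\ln K\sim 2\kappa N$ --- and $T_3$ again decays, in fact super-exponentially. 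The delicate point pervading all four estimates is that the hypotheses are strict inequalities stated in the limit; to extract genuine linear decay one must note that, because $s/N\to c$ and the pertinent log-ratios converge, each strict inequality persists with a fixed positive gap for all large $N$, so that every $\frac1N\ln T_i$ is bounded above by a negative constant. Taking the maximum of these four negative rates yields an explicit negative exponential rate for $(R_1+R_2)K(K-1)/2$, which is the claim.
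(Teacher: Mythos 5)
Your proof is correct and follows the same route as the paper's (very terse) two-sentence argument: the quadratic-in-$N$ exponent of $R_1$ handles that term once $s/N$ is bounded away from zero, and the two hypotheses are exactly what make the per-$N$ log-rates of the remaining terms negative. Your additional care --- carrying the union-bound factor $K(K-1)/2$ (which is what accounts for the coefficient $3$ on $\ln(K+1)$ and matches the bound stated in the introduction), the case split on bounded versus unbounded $K$ for the term $K^2(1+4/\delta)^N e^{-Ks(\delta^2/4-\delta^3/3)}$, and the remark that the strict inequalities must persist with a uniform gap --- simply supplies the details the paper leaves to the reader.
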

\begin{proof}
As $s/N$ remains bounded away from zero as $N \to \infty$, $R_1$ decays exponentially.
Examining the exponents of the terms in $R_2$ shows that if the conditions on $K$, $s$ and $N$
are satisfied then it decays exponentially as $N \to \infty$ as well.
\end{proof}

\end{document}